\documentclass[sigconf]{acmart}
\usepackage{cleveref}
\usepackage{algorithm2e}
\usepackage{multirow}
\usepackage{mathtools}
\usepackage{amsmath}
\crefname{subsection}{subsection}{subsections}
\newtheorem{theorem}{Theorem}[section]
\newtheorem{lemma}[theorem]{Lemma}
\newtheorem{corollary}[theorem]{Corollary}

\def\namedlabel#1#2{\begingroup
    #2%
    \def\@currentlabel{#2}%
    \phantomsection\label{#1}\endgroup
}
\theoremstyle{definition}
\newtheorem{definition}[theorem]{Definition}
\theoremstyle{remark}
\newtheorem{remark}[theorem]{Remark}

\makeatletter
\newcommand\footnoteref[1]{\protected@xdef\@thefnmark{\ref{#1}}\@footnotemark}
\makeatother

\DeclareMathOperator{\LCLM}{LCLM}
\DeclareMathOperator{\GCRD}{GCRD}

\DeclareMathOperator{\ord}{ord}
\DeclareMathOperator{\Hom}{Hom}
\DeclareMathOperator{\End}{End}

\DeclareMathOperator{\Minop}{MinOp}

\DeclareMathOperator{\Orders}{Orders}
\DeclareMathOperator{\AbsOrders}
{AbsOrders}

\DeclareMathOperator{\RightFactors}{RightFactors}
\newcommand{\pda}{\mathord{\downarrow}}

\newcommand{\cs}{\mathbin{\circledS}}

\setlength\parindent{0pt}
\setlength\parskip{0.9em}
\definecolor{bluesequence}{HTML}{3081D0}
\definecolor{c1}{RGB}{203, 75, 48}


\copyrightyear{2024} 
\acmYear{2024} 
\setcopyright{acmlicensed}
\acmConference[ISSAC '24]{International Symposium on Symbolic and Algebraic Computation}{July 16--19, 2024}{Raleigh, NC, USA}
\acmBooktitle{International Symposium on Symbolic and Algebraic Computation (ISSAC '24), July 16--19, 2024, Raleigh, NC, USA}
\acmDOI{10.1145/3666000.3669719}
\acmISBN{979-8-4007-0696-7/24/07}


\begin{document}

\title{Solving Third Order Linear Difference Equations in Terms of Second Order Equations}

\author{Heba Bou KaedBey}
\affiliation{%
  \institution{Florida State University}
  \streetaddress{}
  \city{Tallahassee}
  \state{FL}
  \postcode{32306}
  \country{USA}}
\email{hb20@fsu.edu}

\author{Mark van Hoeij}
\affiliation{%
  \institution{Florida State University}
  \streetaddress{}
  \city{Tallahassee}
  \state{FL}
  \country{USA}
  \postcode{32306}}
\email{hoeij@math.fsu.edu}

\author{Man Cheung Tsui}
\affiliation{%
  \institution{Florida State University}
  \streetaddress{}
  \city{Tallahassee}
  \state{FL}
  \country{USA}}
\email{manctsui@gmail.com}

\renewcommand{\shortauthors}{Bou KaedBey et al.}

\begin{abstract}
    We present two algorithms for computing what we call the absolute factorization of a difference operator. We also give an algorithm for solving third order difference equations in terms of second order equations, together with applications to OEIS sequences. The latter algorithm is similar to existing algorithms for differential equations in \cite{singer1985solving, van2007solving}, except that there is an additional order one symmetric product.

\end{abstract}

\begin{CCSXML}
    <ccs2012>
    <concept>
    <concept_id>10010147.10010148.10010149.10010150</concept_id>
    <concept_desc>Computing methodologies~Algebraic algorithms</concept_desc>
    <concept_significance>500</concept_significance>
    </concept>
    </ccs2012>
\end{CCSXML}

\ccsdesc[500]{Computing methodologies~Algebraic algorithms}

\keywords{Linear recurrence equations, solving in terms of lower order, absolute factorization, algorithms}

\received{}
\received[revised]{}
\received[accepted]{}

\maketitle

\section{Introduction}
\label{section:introduction}
Let $D = \mathbb{C}(x)[\tau]$ be the noncommutative polynomial ring in $\tau$ over $\mathbb{C}(x)$ with multiplication $\tau\circ f(x)=f(x+1)\circ \tau$.
An element
\[
    L=a_n\tau^n+a_{n-1}\tau^{n-1}+\dots+a_{1}\tau+a_{0},\quad a_{n},a_{0}\neq 0,
\]
of $D$ is called a \emph{difference operator} of order $n$.
Such an operator acts on $f(x)\in \mathbb{C}(x)$ by 
$L(f)(x)=\sum_{i} a_i\,f(x+i)$.

A difference operator is said to be \emph{2-solvable} if it has a nonzero \emph{2-expressible} solution, i.e., a solution that can be expressed\footnote{\label{note1}using difference ring operations, indefinite summation, and interlacing; see \cite{Tsui2024eulerian} for more details} in terms of solutions of second order equations. 
The notion of a
$2$-solvable operator is a generalization of {operators whose solutions are \emph{Liouvillian} sequences, defined in \cite{hendricks1999solving}.}

Liouvillian solutions are expressed$\mbox{}^{\ref{note1}}$ in terms of solutions of first order equations. The key distinction is that a product of solutions of first order equations still satisfies a first order equation, but for order~2 this no longer holds.
Already for third order this distinction adds a case to the classification, namely case ($S^2$) below.

\begin{theorem}[{\cite[Theorem 7.1]{Tsui2024eulerian}} restated]
Let $L\in D$ be a third order linear difference operator that is $2$-solvable. Then one of the following holds:
\begin{enumerate}
    \item [\namedlabel{case-1}{(R)}] (Reducible case). $L$ admits a nontrivial factorization over $\mathbb{C}(x)$;
    \item[(L)] \label{case-2} (Liouvillian case). $L$ is gauge equivalent to $\tau^{3}+a$ for some $a\in \mathbb{C}(x)$; (See \Cref{def:gauge} for a definition of gauge equivalence.)
    \item[($S^2$)] \label{case-3} (Symmetric square case). $L$ is gauge equivalent to $L_{2}^{\cs 2}\cs L_{1}$ for some first order operator $L_{1}$ and some second order operator $L_{2}$ over $\mathbb{C}(x)$. (See \Cref{def:symm-prod} on symmetric product.)
\end{enumerate}
\end{theorem}

Cases (R) and (L) are already treated by existing algorithms in \cite{petkovvsek1992hypergeometric, barkatou2024hypergeometric, hendricks1999solving}. 
One main goal of this paper is to give an algorithm in \Cref{subsection:reduce-order} to deal with case ($S^2$).

This paper is organized as follows. In Section 2, we review definitions and results on difference operators and modules. In Section 3, we give an algorithm for \emph{absolute factorization}. As illustrated in Example~\ref{A260772}, this covers one of the cases in the classification (future work, \Cref{futurework}) of $2$-solvable order 4 equations. Moreover, we will describe the orders of the resulting factors in this factorization through the notion of \emph{absolute order} in \Cref{subsection:absolute-order}.

Section 4 gives a second algorithm for absolute factorization. Section 5 covers case ($S^2$): given $L \in D$ of order~$3$, decide if case ($S^2$) holds, and if so, return $L_2$, $\tau - r$, and the gauge transformation.
We give an example from the OEIS where our algorithm produces an output that proves a conjecture from Z.-W. Sun.

The first two authors were supported by NSF grant 2007959.

\section{Preliminaries}

This section introduces some notations and facts about difference operators and modules that we will use. The standard facts we use are from \cite{hendricks1999solving, van2006galois}.

Consider the ring $S=\mathbb{C}^{\mathbb{N}}/\sim$ 
under the equivalence relation $u \sim v$ if $(u-v)(x)\neq 0$ 
for finitely many $x$. An element of $S$ is called a \emph{sequence} and represented as a function $u:\mathbb{N}\to \mathbb{C}$ or a list $(u(0),u(1),\dots)$. 
The set $S$ is a difference ring under component-wise addition and multiplication, and $\tau(u(x))=u(x+1)$.
Since rational functions have finitely many poles,
$\mathbb{C}(x)$ embeds into $S$ by evaluation at $\mathbb{N}$, making $S$ a (left) $D$-module where $D=\mathbb{C}(x)[\tau]$.

The \emph{solution space} $V(L)$ of an operator $L$ is the set $\{u\in S\mid Lu=0\}$. 
This is a $\mathbb{C}$-vector space of dimension $\ord(L)$ by \cite[Theorem 8.2.1]{petkovvsek1997wilf}.

We can define some notions about operators in terms of $D$-modules.
If $u$ is an element of a $D$-module, the \emph{minimal operator} of $u$, denoted by $\Minop(u,D)$, 
is the monic generator of the left ideal $\{L\in D \mid L(u)=0\}$ of $D$. 
Given $L\in D$ and $u\in V(L)$, the Maple command \texttt{MinimalRecurrence} computes $\Minop(u,D)$.

Let $M$ be a $D$-module. Then $u\in M$ is a \emph{cyclic vector} if $Du=M$.
We call $M$ \emph{irreducible} if every nonzero element of $M$ is cyclic; 
equivalently, the minimal operator of each element of $M$ is irreducible of order $\dim(M)$.

For $L_{1},L_{2}\in D$, the operators $\GCRD(L_1,L_2)$ and $\LCLM(L_1,L_2)$ are defined as the monic generators of the left ideals $DL_1+DL_2$ and $DL_1\cap DL_2$ of $D$, respectively. Then the following facts hold.
\begin{enumerate} 
    \item $L_1$ is a right factor of $L_2$ if and only if $V (L_1)\subseteq V(L_2)$. 
    \item $V(\GCRD(L_1,L_2))=V(L_1)\cap V(L_2)$.
    \item $V(\LCLM(L_1,L_2))=V(L_1)+V(L_2)$.
\end{enumerate} 

Here are some more constructions on difference operators and modules we will use.

\begin{definition}
\label{def:symm-prod}
    Let $M$ and $N$ be $D$-modules.
    The \emph{tensor product} $M\otimes N$ is a $D$-module under $\tau(m \otimes n)=\tau(m) \otimes \tau(n)$. 
    The \emph{symmetric product} $L_1 \cs L_2$ of $L_1,L_2 \in D$ is the minimal operator of $1\otimes 1$ in $(D/DL_{1})\otimes(D/DL_{2})$. 
    The \emph{symmetric square} of $L$ is $L^{\cs 2}\coloneqq L\cs L$.
\end{definition}

\begin{lemma} [{\cite[Proposition 2.13]{van2012galois} or \cite[Lemma 2.5]{singer1996testing}}] 
\label{lemma:homomorphism} 
    Let $L_1,L_{2}\in D$. Then the map 
    \begin{align*}
        \Psi: \Hom_{D}(D/DL_1,D/DL_2) &\longrightarrow \Hom_{\mathbb{C}}(V(L_2),V(L_1)) \\
        \phi &\mapsto G_{\phi}:V(L_2) \to V(L_1)& 
    \end{align*}
    is injective, where $G_{\phi}$ is defined by $\phi(\bar 1)=G_{\phi}+DL_2$.
\end{lemma}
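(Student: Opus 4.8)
The plan is to exploit that $D/DL_1$ is generated as a $D$-module by the class $\bar 1$ of $1$, so any $D$-homomorphism $\phi$ out of it is pinned down by the single element $\phi(\bar 1)=G_\phi+DL_2$. First I would record the constraint that $D$-linearity imposes on $G_\phi$: since $L_1\cdot\bar 1=0$ in $D/DL_1$, applying $\phi$ gives $0=L_1\cdot(G_\phi+DL_2)$, that is, $L_1 G_\phi\in DL_2$. Conversely, because $DL_2$ is a left ideal, any $G_\phi$ with $L_1 G_\phi\in DL_2$ yields a well-defined $\phi$. Note also that $G_\phi$ is only determined modulo $DL_2$, which will matter below.

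Next I would check that $\Psi$ genuinely lands in $\Hom_{\mathbb{C}}(V(L_2),V(L_1))$, i.e.\ that applying the operator $G_\phi$ sends $V(L_2)$ into $V(L_1)$ and does so independently of the chosen representative. For $v\in V(L_2)$, write $L_1 G_\phi=P L_2$ using the constraint above; then $L_1(G_\phi v)=P(L_2 v)=0$, so $G_\phi v\in V(L_1)$. For independence of the representative, replacing $G_\phi$ by $G_\phi+Q L_2$ changes the value on $v$ by $Q L_2 v=0$. This makes $G_\phi\colon V(L_2)\to V(L_1)$ a well-defined $\mathbb{C}$-linear map, so $\Psi$ is a legitimate map of $\mathbb{C}$-vector spaces.

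The heart of the argument is injectivity, and here the key input is fact (1) relating solution-space containment to right division. Suppose $\Psi(\phi)=0$, so $G_\phi v=0$ for every $v\in V(L_2)$, which says exactly that $V(L_2)\subseteq V(G_\phi)$. If $G_\phi$ is the zero operator we are immediately done; otherwise fact (1) turns the containment $V(L_2)\subseteq V(G_\phi)$ into the statement that $L_2$ is a right factor of $G_\phi$, i.e.\ $G_\phi\in DL_2$. Therefore $\phi(\bar 1)=G_\phi+DL_2=\bar 0$, and since $\bar 1$ generates $D/DL_1$ this forces $\phi=0$.

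The step I expect to require the most care is this invocation of fact (1) in the injectivity argument: one must check that the direction passing from the solution-space containment $V(L_2)\subseteq V(G_\phi)$ back to the conclusion that $L_2$ is a right factor of $G_\phi$ applies. This implicitly uses that $\dim V(L_2)=\ord(L_2)$ together with the $\GCRD$ characterization $V(\GCRD(L_2,G_\phi))=V(L_2)\cap V(G_\phi)$ from fact (2), and it also needs the degenerate case $G_\phi=0$ separated out (where the order-counting does not apply). Everything else is bookkeeping about the cyclic generator $\bar 1$ and the left-ideal structure of $DL_1$ and $DL_2$.
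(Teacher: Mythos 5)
Your proof is correct, but there is nothing in the paper to compare it against: the paper does not prove this lemma at all, it simply quotes it from the cited references (van der Put--Singer, Prop.\ 2.13, and Singer, Lemma 2.5). Your argument is in fact the standard one appearing in those sources, transported from the differential to the difference setting: a homomorphism out of the cyclic module $D/DL_1$ is pinned down by $\phi(\bar 1)$; the relation $L_1G_\phi\in DL_2$ makes $G_\phi$ a well-defined map $V(L_2)\to V(L_1)$ independent of the representative; and vanishing of $G_\phi$ on all of $V(L_2)$ forces $G_\phi\in DL_2$, hence $\phi=0$. Your closing caution is also exactly the right one, and is worth making explicit: the step from $V(L_2)\subseteq V(G_\phi)$ to $G_\phi\in DL_2$ is where the input $\dim_{\mathbb{C}}V(L_2)=\ord(L_2)$ enters (e.g.\ via $H=\GCRD(G_\phi,L_2)$: $H$ right-divides $L_2$ and $V(H)\supseteq V(L_2)$, so $\ord(H)\ge\dim V(L_2)=\ord(L_2)$, forcing $DH=DL_2$, and $H$ right-divides $G_\phi$). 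This is also precisely the point where the lemma's hypothesis ``Let $L_1,L_2\in D$'' is too generous if read literally: for elements of $D$ with zero trailing coefficient the statement fails --- take $L_1=L_2=\tau$, so that $V(\tau)=0$ in $S$ while $\bar 1\mapsto c+D\tau$ is a nonzero element of $\End_D(D/D\tau)$ for every nonzero $c\in\mathbb{C}(x)$, and $\Psi$ is far from injective. So the paper's standing convention that a difference operator has nonzero leading and trailing coefficients, which is what guarantees $\dim V(L_2)=\ord(L_2)$, is genuinely needed, and your proof correctly isolates it as the crucial input.
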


Homomorphisms between $D$-modules can be computed by \cite{barkatou2024hypergeometric,Imp} and help detect reducibility of operators:

\begin{lemma}[{\cite[Corollary 2.7]{singer1996testing}}]
\label{lemma:reducible-operator} 
    If $\End_{D}(D/DL) \neq \mathbb{C}$, then $L$ is reducible.
\end{lemma}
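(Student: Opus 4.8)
The plan is to prove the contrapositive: assuming $L$ is irreducible, I will show $\End_D(D/DL)=\mathbb{C}$. Write $M=D/DL$ and $n=\ord(L)$, and recall the standard dictionary between right factors and submodules: a nontrivial factorization $L=L_2L_1$ with $0<\ord(L_1)<n$ yields the proper nonzero submodule $DL_1/DL\subseteq M$, and conversely every proper nonzero submodule arises this way. Hence $L$ irreducible is equivalent to $M$ being a \emph{simple} $D$-module. For a simple module, any nonzero $\phi\in\End_D(M)$ has $\ker\phi$ and $\im\phi$ equal to $0$ or $M$; since $\phi\neq 0$ forces $\ker\phi=0$ and $\im\phi=M$, every nonzero endomorphism is an isomorphism. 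Thus $\End_D(M)$ is a division ring (Schur's lemma).

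Next I would show $\End_D(M)$ is finite-dimensional over $\mathbb{C}$, and this is precisely where \Cref{lemma:homomorphism} is used. Taking $L_1=L_2=L$, the map $\Psi\colon \End_D(M)\to \End_{\mathbb{C}}(V(L))$ is $\mathbb{C}$-linear and injective. Since $\dim_{\mathbb{C}}V(L)=n$, we have $\End_{\mathbb{C}}(V(L))\cong M_n(\mathbb{C})$ of $\mathbb{C}$-dimension $n^2$, so $\End_D(M)$ embeds as a $\mathbb{C}$-subspace of dimension at most $n^2$. Moreover the scalar endomorphisms $\{c\,\id : c\in\mathbb{C}\}\cong\mathbb{C}$ lie centrally in $\End_D(M)$, making it a finite-dimensional $\mathbb{C}$-algebra containing $\mathbb{C}$. (Heuristically this is why the answer is $\mathbb{C}$ and not $\mathbb{C}(x)$: left multiplication by a nonconstant $f\in\mathbb{C}(x)$ fails to be $D$-linear, since $\tau f=f(x+1)\tau\neq f\tau$ in $D$.)

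Finally I would invoke algebraic closedness of $\mathbb{C}$. For any $a\in\End_D(M)$, the commutative subalgebra $\mathbb{C}[a]$ is finite-dimensional over $\mathbb{C}$ and, sitting inside a division ring, is an integral domain; hence it is a field and a finite extension of $\mathbb{C}$. As $\mathbb{C}$ is algebraically closed, $\mathbb{C}[a]=\mathbb{C}$, so $a\in\mathbb{C}$. Therefore $\End_D(M)=\mathbb{C}$, which is the contrapositive of the claim.

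The step I expect to be the real content — and what upgrades the conclusion from merely ``$\End_D(M)$ is a division ring'' to ``$\End_D(M)=\mathbb{C}$'' — is establishing finite-dimensionality over the \emph{algebraically closed} field $\mathbb{C}$. A $D$-endomorphism is automatically $\mathbb{C}(x)$-linear, but that only bounds $\End_D(M)$ by $n^2$ over $\mathbb{C}(x)$, and a finite-dimensional division algebra over the non-closed field $\mathbb{C}(x)$ need not be trivial. The essential input is \Cref{lemma:homomorphism}, which realizes endomorphisms as $\mathbb{C}$-linear maps on the finite-dimensional solution space $V(L)$ and thereby delivers finiteness over $\mathbb{C}$ itself.
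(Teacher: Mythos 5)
Your proof is correct, but there is nothing in the paper to compare it against line by line: the paper states this lemma purely as a quotation of \cite[Corollary 2.7]{singer1996testing} and gives no proof of its own. Measured against the standard argument behind that citation, your route is essentially it: Schur's lemma makes $\End_D(D/DL)$ a division ring when $L$ is irreducible; the injective $\mathbb{C}$-linear map $\Psi$ of \Cref{lemma:homomorphism} with $L_1=L_2=L$ embeds it into the $n^2$-dimensional space $\End_{\mathbb{C}}(V(L))$; and a finite-dimensional division algebra over the algebraically closed field $\mathbb{C}$, containing $\mathbb{C}$ centrally, must equal $\mathbb{C}$. You also correctly isolate the crux, namely finite-dimensionality over $\mathbb{C}$ itself rather than over $\mathbb{C}(x)$, which is exactly what \Cref{lemma:homomorphism} supplies. (One small precision: $\End_D(D/DL)$ is only a $\mathbb{C}$-subspace, not a $\mathbb{C}(x)$-subspace, of $\End_{\mathbb{C}(x)}(D/DL)$, since multiplication by a nonconstant $f$ destroys $D$-linearity, as you yourself note; so ``bounded by $n^2$ over $\mathbb{C}(x)$'' should be read as ``contained in an $n^2$-dimensional $\mathbb{C}(x)$-space,'' which by itself gives no bound on the $\mathbb{C}$-dimension.) It is worth knowing the alternative, constructive route, which proves the lemma in its stated direction rather than by contraposition: given $\phi\in\End_D(D/DL)$ with $\phi\notin\mathbb{C}$, the operator $G_\phi=\Psi(\phi)$ has order $<\ord(L)$, is not a constant, and acts $\mathbb{C}$-linearly on $V(L)$; pick an eigenvalue $c\in\mathbb{C}$ of this action (algebraic closedness enters here instead). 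Then $G_\phi-c$ is a nonzero operator sharing a nonzero solution with $L$, so by fact (2) of the Preliminaries, $\GCRD(G_\phi-c,L)$ has order strictly between $0$ and $\ord(L)$ and right-divides $L$. Your argument is cleaner as a pure irreducibility criterion; the eigenvalue argument additionally hands you the factor, which is what the eigenring factorization method implements and is in the same spirit as how the paper, in Section 4, converts a nonzero element of $\Hom_D(D/D\widetilde{L},D/DL)$ into explicit factors.
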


\begin{lemma} 
\label{lemma:gauge}
    Let $L_1,L_2\in D$ have the same order. The following statements are equivalent:
    \begin{enumerate} 
        \item $D/DL_1\cong D/DL_2$ as $D$-modules.
        \item There exists $G \in D$ such that $G(V(L_2))=V(L_1).$
        \item Let $G$ be as in (2), and such that $L_1G \in DL_2$ and $\GCRD(G,L_2)=1.$
        \item There exists a $D$-module with cyclic vectors $u_1,u_2$ such that $L_1=Minop(u_1,D)$ and $L_2=Minop(u_2,D).$  
    \end{enumerate}  
\end{lemma}

\begin{definition}  \label{def:gauge} If any statement in Lemma \ref{lemma:gauge} holds, we say that $L_1$ is \emph{gauge equivalent} to $L_2$,
    and $G$ is a \emph{gauge transformation} from $L_{2}$ to $L_{1}$.
    Moreover,
    $G: V(L_2) \rightarrow V(L_1)$ is a bijection.  \end{definition}
    
\begin{remark} \label{remark:gauge} Computing
the inverse of $G$. Since $\GCRD(G,L_2)=1$ in $D$, the extended Euclidean algorithm finds $\widetilde{G},R \in D$ with $\widetilde{G}G+RL_2=1$. Thus $\widetilde{G}G\equiv 1\pmod{L_{2}}$ and $\widetilde{G} G$ is the identity map on $V(L_2)$. Therefore $\widetilde{G}: V(L_1) \to V(L_2)$ is the inverse of $G$.\end{remark}

\section{Absolute Factorization}
In this section, we examine when, and in what sense,
an irreducible operator can be ``factored'' further, or else be
``absolutely irreducible''.
{The motivation is that this is needed when we want to extend our work to order $>3$; see Section \ref{futurework}.}

\begin{definition}[{\cite[Definition 5.1]{Tsui2024eulerian}}]
    Let $D_{m}=\mathbb{C}(x)[\tau^m] \subseteq D$, where $m \in \mathbb{Z}^{+}$.
    Let $M$ be a $D$-module.
    Then $M\pda^{1}_{m}$ is $M$ viewed as a $D_{m}$-module.
\end{definition}

A submodule of $M\pda^{1}_{m}$ is simply a subset of $M$ that is also a $D_m$-module. Since every $D$-module is a $D_m$-module, but not vice versa,
$M\pda^{1}_{m}$ could have more submodules than $M$.

The following is an isomorphism between $D$ and $D_m$:
\begin{equation}\begin{split} \label{isom-D-Dm}
 \psi_{m} :D &\longrightarrow D_m \\
  \tau &\mapsto \tau^m \\
  x &\mapsto \frac{x}{m}\end{split}\end{equation} 

\begin{definition}
\label{def:section-operator}
    Let $L\pda^{1}_{m}$ be the minimal operator of $\bar{1} \in (D/DL)\pda^1_m$, 
    i.e., the monic element of minimal order in $D_{m}\cap DL$. The $m$th \emph{section operator} is defined as $L^{(m)}\coloneqq \psi_{m}^{-1}(L\pda^{1}_{m}) \in D$.
\end{definition} 

Note that $L\pda^{1}_{m}$ is the same as $P(\phi^m)$ in \cite[Lemma 5.3]{hendricks1999solving} while $L^{(m)}$ is $P_{0}(\phi)$ in \cite[Lemma 5.3]{hendricks1999solving}. If $\overline{1}$ generates a proper $D_m$-submodule of $(D/DL)\pda^1_m$ then we say that the order of $L^{(m)}$ is \emph{lower than expected} (lower than the order of $L$).

\begin{definition} A $D$-module $M$ is \emph{absolutely irreducible} if $M\pda^{1}_{t}$ is irreducible for every $t \ge 1$. An operator $L$ is \emph{absolutely irreducible} if $M = D/DL$ is \emph{absolutely irreducible}.\end{definition}

For difference and differential operators, $L$
is absolutely irreducible when its solution space is 
an irreducible $G^o$-module, where $G^o$ is the connected component
of the identity of the Galois group. In the differential case
this occurs when $L$ is irreducible over algebraic extensions
of $\mathbb{C}(x)$.
In the difference case, such extensions are replaced with section operators.

\begin{definition} 
\label{def:ord-m}
    Let $m$ be in $\mathbb{Z}^{+}$ and let $L$ be in $D_m$. 
    We define $\ord_{m}(L)\coloneqq \ord(L)/m$, the highest power of $\tau^m$ that appears in $L$.
\end{definition}

\begin{theorem} 
\label{thm:orders}
Let $L$ be irreducible in $D$ and let $M = D/DL$. Let $m$ be in $\mathbb{Z}^{+}$. Then the following are equivalent.
\begin{enumerate} 
    \item $M\pda^{1}_{m}$ is a reducible $D_m$-module.
    \item $L\pda^{1}_{m}$ is reducible in $D_m$ or $\ord_{m}(L\pda^{1}_{m})<\ord(L)$.
    \item $L^{(m)}$ is reducible in $D$ or  $\ord(L^{(m)})<\ord(L)$.
    \item $M\pda^{1}_{p}$ is reducible for some prime $p$ dividing $\gcd(m, \dim(M))$.
\end{enumerate} 
\end{theorem}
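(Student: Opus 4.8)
The plan is to prove the cycle of implications by treating (1)$\Leftrightarrow$(2), (2)$\Leftrightarrow$(3), and (4)$\Rightarrow$(1) as direct and concentrating the work on (1)$\Rightarrow$(4). For (1)$\Leftrightarrow$(2), I would observe that the $D_m$-submodule $D_m\bar 1\subseteq M$ has $\mathbb{C}(x)$-dimension $\ord_m(L\pda^1_m)=\ord(L^{(m)})$, so $\bar 1$ generates $M\pda^1_m$ over $D_m$ exactly when $\ord_m(L\pda^1_m)=\ord(L)$. If it does not generate, then $D_m\bar 1$ is a proper nonzero submodule and (1) holds; if it does, then $M\pda^1_m\cong D_m/D_m(L\pda^1_m)$, and reducibility of this cyclic module is equivalent to reducibility of the operator $L\pda^1_m$. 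Matching the two disjuncts yields (1)$\Leftrightarrow$(2). For (2)$\Leftrightarrow$(3), the map $\psi_m$ is a ring isomorphism $D\to D_m$ sending $L^{(m)}$ to $L\pda^1_m$ and $\tau$ to $\tau^m$, hence it preserves reducibility and carries $\ord(L^{(m)})$ to $\ord_m(L\pda^1_m)$. For (4)$\Rightarrow$(1): if $p\mid m$ then $\tau^m\in D_p$, so $D_m\subseteq D_p$, and a proper nonzero $D_p$-submodule is also a proper nonzero $D_m$-submodule; thus reducibility of $M\pda^1_p$ forces reducibility of $M\pda^1_m$.

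For (1)$\Rightarrow$(4) I would pass to the semilinear picture. Since $L$ has nonzero trailing coefficient, $\tau$ acts as a bijective ($\phi$-)semilinear operator on the $n$-dimensional space $M$ (here $n=\dim M$), and $D$- (resp. $D_p$-)submodules are precisely the $\tau$- (resp. $\tau^p$-)stable $\mathbb{C}(x)$-subspaces; irreducibility of $M$ says there is no proper nonzero $\tau$-stable subspace. Assuming $M\pda^1_m$ reducible, I would pick an irreducible $D_m$-submodule $W\subsetneq M$. Its translates $\tau^iW$ are again irreducible $D_m$-submodules, and $\sum_i\tau^iW$ is $\tau$-stable and nonzero, hence equals $M$; so $M\pda^1_m$ is a sum of irreducible submodules, i.e.\ semisimple. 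Taking its isotypic decomposition, $\tau$ permutes the components, and because $\tau^m\in D_m$ acts as an inner twist this permutation factors through a cyclic group, giving $r$ components that are cyclically permuted and all of equal dimension. Hence $r\mid m$ and $r\mid n$, so $r\mid\gcd(m,n)$.

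The case $r\ge 2$ is then easy: for a prime $p\mid r$, grouping the cyclically permuted components by residue mod $p$ produces $p$ blocks that $\tau$ permutes cyclically, each $\tau^p$-stable and proper, exhibiting a proper nonzero $D_p$-submodule; thus $M\pda^1_p$ is reducible with $p\mid r\mid\gcd(m,n)$. The substantive case is $r=1$, where $M\pda^1_m\cong S^{\oplus k}$ is isotypic with $k\ge 2$. Here I would pass to the multiplicity space $U=\Hom_{D_m}(S,M)$ over the division ring $\Delta=\End_{D_m}(S)$, with $\dim_\Delta U=k$, on which $\tau$ induces a semilinear operator $\gamma$; since $M$ is $D$-irreducible, $\gamma$ acts $\Delta$-irreducibly, and a proper $\gamma^p$-stable $\Delta$-subspace of $U$ lifts back to a proper $\tau^p$-stable submodule of $M$. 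When $\dim_{\mathbb{C}(x)}S\ge 2$ we have $k<n$, so the desired conclusion for $(U,\gamma,\Delta)$ follows by induction on dimension (after restating the theorem in the mild generality of skew-polynomial rings over a division ring), producing a prime $p\mid\gcd(m,k)$ with $p\mid k\mid n$.

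The main obstacle is the leftover sub-case $\dim_{\mathbb{C}(x)}S=1$, where $k=n$ and no dimension drop occurs. One-dimensionality of $S$ forces $\tau^m$ to act as a scalar on $M$, i.e.\ $L$ is a right factor of $\tau^m-b$ for some $b\in\mathbb{C}(x)^\times$. I would exploit that the $D$-module $V(\tau^m-b)$ is semisimple (the scalar action of $\tau^m$ presents it as an interlacing of $m$ first-order modules, a characteristic-$0$ cyclic-group situation), so its irreducible constituent $M$ has dimension $n\mid m$, whence $\gcd(m,n)=n\ge 2$. Realizing $M$ up to gauge equivalence as $D/D(\tau^n-c)$ and taking any prime $p\mid n$, the vector $\bar 1$ generates only the $(n/p)$-dimensional submodule $\spann_{\mathbb{C}(x)}(\bar 1,\tau^p\bar 1,\dots,\tau^{n-p}\bar 1)$, which is proper and nonzero, so $M\pda^1_p$ is reducible with $p\mid n\mid m$. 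This Liouvillian sub-case—establishing $n\mid m$ and controlling the scalar action of $\tau^m$—is where I expect the real difficulty to lie, whereas the $r\ge 2$ case and the equivalences (1)–(3) are comparatively formal.
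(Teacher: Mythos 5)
Your treatment of (1)$\Leftrightarrow$(2), (2)$\Leftrightarrow$(3), and (4)$\Rightarrow$(1) coincides with the paper's own proof and is correct; note, however, that the paper disposes of (1)$\Rightarrow$(4) simply by citing \cite[Corollary 6.6]{Tsui2024eulerian}, so essentially all of the substance of your proposal lies in your self-contained argument for that implication. Its first half is sound: the translates $\tau^i W$ of an irreducible $D_m$-submodule do show that $M\pda^{1}_{m}$ is semisimple, $\tau$ does permute the $r$ isotypic components cyclically with $r \mid \gcd(m,n)$, and when $r\ge 2$ your grouping by residues modulo a prime $p\mid r$ correctly produces a proper nonzero $D_p$-submodule. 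The genuine gap is the isotypic case $r=1$. Your inductive step for $\dim S\ge 2$ rests on ``restating the theorem in the mild generality of skew-polynomial rings over a division ring,'' but the restated theorem is \emph{false} in that generality: take $\Delta=\mathbb{Q}$ with trivial automorphism and $U=\mathbb{Q}(\zeta_7)$, with $\gamma$ acting as multiplication by $\zeta_7$. Then $U$ is an irreducible $\Delta[\gamma]$-module of dimension $k=6$ on which $\gamma$ acts bijectively, its restriction to $\Delta[\gamma^{7}]$ is reducible (indeed $\gamma^7=\id$, so every subspace is a submodule), yet $\gcd(7,6)=1$, so no prime as in (4) exists. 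Hence the inductive hypothesis you invoke is simply not available. Your sub-case $\dim S=1$ likewise leans on two unproven claims --- that every irreducible constituent of $D/D(\tau^m-b)$ has dimension dividing $m$, and that $M$ is gauge equivalent to some $D/D(\tau^n-c)$ --- which are true but do not follow from semisimplicity alone; they need Hendriks--Singer theory \cite{hendricks1999solving}, or the very ingredient described next.

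The missing ingredient, which repairs and in fact eliminates the entire case $r=1$, is Schur's lemma in the strong form of \Cref{lemma:reducible-operator}: transported through $\psi_m$, it gives $\End_{D_m}(S)=\mathbb{C}$ for any irreducible $D_m$-module $S$. Suppose $M\pda^{1}_{m}\cong S^{\oplus k}$ with $k\ge 2$. Pick an irreducible submodule $W\cong S$; since $\tau$ is a $\phi$-semilinear bijection of $M$, the image $\tau(W)$ is again an irreducible $D_m$-submodule, hence isomorphic to $S$ by isotypicity, and composing $\tau|_W$ with such an isomorphism yields a $\phi$-semilinear bijection $\theta\colon S\to S$ commuting with $\tau^m$. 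Then $\gamma(f)\coloneqq \tau\circ f\circ\theta^{-1}$ is a bijection of $U\coloneqq\Hom_{D_m}(S,M)$, and because $\End_{D_m}(S)=\mathbb{C}$, the space $U$ is a $\mathbb{C}$-vector space of dimension $k$ on which $\gamma$ is $\mathbb{C}$-\emph{linear}. Since $\mathbb{C}$ is algebraically closed, $\gamma$ has an eigenvector $f_0$, and then $\tau(f_0(S))=\gamma(f_0)(\theta(S))=\lambda\, f_0(S)=f_0(S)$, so $f_0(S)$ is a nonzero $D$-submodule of $M$ of dimension $\dim S=n/k<n$, contradicting irreducibility of $M$. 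Therefore $r\ge 2$ always holds, and your mod-$p$ grouping argument alone completes (1)$\Rightarrow$(4); neither the division-ring induction nor the Liouvillian analysis is needed. This is exactly where algebraic closedness of the constants enters the proof --- the feature that your proposed generalization to arbitrary division rings discards, and without which the statement fails, as the $\mathbb{Q}(\zeta_7)$ example shows.
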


\begin{proof} \hfill

(1) $\Leftrightarrow$ (2): 
If $\bar{1} = 1 + DL$ generates $M\pda^{1}_{m}$, then its minimal operator $L\pda^{1}_{m}$ has the expected order, and is reducible in $D_m$ if and only if $M\pda^{1}_{m}$ is reducible. If $\bar{1}$ does not generate $M\pda^{1}_{m}$, then it generates a nontrivial submodule, and $L\pda^{1}_{m}$ has lower order than expected.

(2) $\Leftrightarrow$ (3): 
Apply the isomorphism $\psi_{m}:D\to D_m$ (Equation (\ref{isom-D-Dm})).

(4) $\Rightarrow$ (1): 
Clear because $D_m \subseteq D_p$.

(1) $\Rightarrow$ (4): 
This is precisely \cite[Corollary 6.6]{Tsui2024eulerian}.
\end{proof}

As mentioned in the above proof, if $M=D/DL$ then $\bar{1}=1+DL$ generates $M$ as a $D$-module, but it might not generate $M$ as $D_m$-module, in other words, $D_m\cdot \bar{1} \subsetneq M \pda^{1}_{m}$ is a proper submodule. 
That case corresponds to Step~1b in the algorithm below. 
The other option is that $\bar{1}$ does generate $M \pda^{1}_{m}$. 
In that case, 
having a proper submodule is equivalent to $L\pda^1_m$ having a proper factor in $D_m$, and $L^{(m)}$ having a proper factor in $D$, 
which corresponds to Step~1c in the algorithm below.

\subsection{Absolute Factorization Algorithm} \label{abs}

\Cref{thm:orders}~(4) immediately gives the correctness of the following algorithm. The implementation is in \cite{algo}.

\textbf{Algorithm:} \texttt{AbsFactorization}\\
\textbf{Input:} An irreducible operator $L\in D$.\footnote{\label{foot:note1} Factoring is explained in \cite{barkatou2024hypergeometric,zhou2022algorithms}. The Maple command \texttt{RightFactors}, implemented by van Hoeij, factors $L$ if it is reducible. By default, it factors in $C(x)[\tau]$ where $C$ is the smallest field of constants over which the input is defined. To ensure a full factorization over $\mathbb{C}$ one may need to add field extensions to an optional input of \texttt{RightFactors} to ensure that $C$ is large enough.}\\
\textbf{Output:} \texttt{Absolutely\,Irreducible} if $L$ is absolutely irreducible; otherwise $[p,\{R_i\}]$ where $p$ is a prime and where each $R_i$ right-divides $L^{(p)}$ {and $\psi_{p}(R_i)$ generates a nontrivial submodule of $M \pda^{1}_{p}$, with $M = D/DL$.}

\begin{enumerate}
    \item For each prime factor $p$ of $\ord(L)$\,: \begin{enumerate}
        \item Compute $L^{(p)}$ from Definition \ref{def:section-operator}.
        \item If $\ord(L^{(p)}) < \ord(L)$, then return $[p,\{1\}]$. \label{1b}
        \item Compute $S\coloneqq \RightFactors\Big(L^{(p)},\frac{\ord(L)}{p}\Big)$. \label{1c}
        \item \label{1d} If $S \neq \varnothing$, then return $[p,S]$ and stop.
    \end{enumerate}
    \item Return \texttt{Absolutely\,Irreducible}.
\end{enumerate}

\subsection{Example: OEIS A260772} \label{A260772}

In the OEIS database,
the sequence $A260772$
\begin{equation} 
\label{sequence}
  \big(\textcolor{bluesequence}{1}, 3, \textcolor{bluesequence}{10}, 41, \textcolor{bluesequence}{190}, 946, \ldots \big) \in \mathbb{C^{\mathbb{N}}}
\end{equation}
counts certain directed lattice paths. 
It has the minimal operator
\begin{equation*}
    \begin{split}
        L \coloneqq & (x+5)(x+4)(25x^2+130x+141)\tau^4-30(x+4)(7x+13)\tau^3 \\
        & - (1100x^4+12320x^3+48664x^2+80740x+47400)\tau^2  \\
        & + 120(x+6)(x+1)\tau-16x(x+1)(25x^2+180x+296).
    \end{split}
\end{equation*}
Even though $L$ is irreducible of order 4, it is nevertheless 2-solvable, as we now show.
The subsequence of \eqref{sequence},
\[u(n)= A260772(2n)= (\textcolor{bluesequence}{1, 10, 190, \ldots}) \in \mathbb{C}^{\mathbb{N}} \]
is called a \emph{2-section} of \eqref{sequence}.  
The other 2-section is $A260772(2n+1) = 3, 41, 946, \ldots$

The minimal recurrence of $u(n)$ is the \emph{2-section operator} from \Cref{def:section-operator}:
\begin{equation*} \textcolor{bluesequence}{
    \begin{split}
        L^{(2)} & =   (4x^4 + 56x^3 + 287x^2 + 634x + 504)\tau^4 \\
         & +(-352x^4 - 4048x^3 - 17276x^2 - 32354x - 22344)\tau^3 \\
         & +(7616x^4 + 68544x^3 + 229648x^2 + 339408x + 186648)\tau^2  \\
        & +(5632x^4 + 36608x^3 + 86336x^2 + 88288x + 32928)\tau  \\
        & +1024x^4 + 4096x^3 + 4352x^2 + 1280x. 
    \end{split}}
\end{equation*}

But this operator is reducible! 
Namely, 
\begin{equation} 
\label{eq3} 
    L^{(2)}=\LCLM(R,R') 
\end{equation} 
where $R$ and $R'$ are irreducible second order operators given in \eqref{eq:factors} below. 
This means that the 2-section $u(n)$ of A260772 is {2-expressible}. 
Equation \eqref{eq3} expresses the fact that $u(n)$ can be written as 
\begin{equation}           
\label{solution}
    u(n)=t(n)+t'(n)
\end{equation} 
for some $t$, $t' \in \mathbb{C}^{\mathbb{N}}$ that satisfy $R$, $R'$. Computing bases of solutions of $R$ and $R'$ in $\mathbb{C}^{\mathbb{N}}$ and comparing initial conditions gives $t(0)=0,t(1)=4$, and $t'(0) = 1, t'(1)=6$. 
The recurrence $R(t)=0$ reads:
\begin{equation} 
    \label{recurrence} t(x+2) =  \frac{a_0t(x)+a_1t(x+1)}{(x+2)(2x+5)(5x+3)}
\end{equation}
which expresses $t(2), t(3),\ldots$ in terms of prior $t$ values. 
Here $a_0 = 16x(2x+1)(5x+8)$ and $a_1 = 440x^3+1584x^2+1780x+600$.

We can express $t'(n), n \in \mathbb{N}$ by using~\eqref{recurrence} for $x \in -\frac{1}{2}+\mathbb{N}$, as follows. Let $t(-\frac{1}{2}) =  -\frac{1}{2}$ and $t(\frac{1}{2}) = 1$, 
and define $t(x+2)$ for $x \in -\frac{1}{2}+\mathbb{N}$ with \eqref{recurrence}.
By computing a gauge transformation from $V(R\vert_{x \mapsto x + \frac12})$ to $V(R')$, we find
\begin{equation} \label{un}
    t'(n) = \frac{(2-4n)t\big(n-\frac{1}{2}\big) + (2n+2)t\big(n+\frac{1}{2}\big)}{1+10n}.
\end{equation}
See \cite{algo} for further details.

Recall that $u(n) = A260772(2n)$.
Substituting~\eqref{un} and $n \mapsto n/2$
in~\eqref{solution} gives a formula (that we uploaded to OEIS) for $A260772(n)$, 
written in terms of solutions (in $\mathbb{C}^{\mathbb{N}}$ and in ${\mathbb{C}}^{-\frac{1}{2} + \mathbb{N}}$)
of a \emph{second order} 
recurrence~\eqref{recurrence}.

This example illustrates one of the cases in the classification (work in progress, Section \ref{futurework}) of $2$-solvable order $4$ equations.
Algorithm \texttt{AbsFactorization} covers this case, which we will illustrate with the same example.

$L$ has order $4$ so the only prime $p$ in Step 1 is $p=2$. 
Step~1a computes $L^{(2)}$.
It has order 4 (Step~1b) so we proceed with Step~1c which computes the set of right factors $S = \{R, R'\}$ where

\begin{equation}
\label{eq:factors}
    \begin{split}
      R &= (2x + 5)(5x + 3)(x + 2)\tau^2\\
        -&(440x^3 + 1584x^2 + 1780x + 600)\tau
        - 8(5x + 8)(4x^2 + 2x), \\
      R' &= (2x + 5)(10x + 9)(x + 2)\tau^2\\
        -& (880x^3 + 3432x^2 + 4220x + 1650)\tau
        - 16(10x + 19)(2x^2 + x).
    \end{split}
\end{equation}

$R$ and $R'$ generate nontrivial submodules of $D/D L^{(2)}$. 
{Applying $\psi_{2}$ gives generators of the corresponding submodules of $(D/DL)\pda^1_2$.}

\subsection{Absolute Order}
\label{subsection:absolute-order}
The right factors $R$ and $R'$ in \eqref{eq:factors} exemplify a general phenomenon: 
for an irreducible operator $L$, all irreducible factors of $L^{(m)}$ will have the same order. 
This is a consequence of \Cref{cor:abs-ord} below, using the concept of absolute order which we now define.

\begin{definition}
    Let $M$ be a $D$-module. Let the sequence
    \begin{equation*} 
        0=M_0 \subsetneq M_1 \subsetneq \dots \subsetneq M_{k-1} \subsetneq M_k=M
    \end{equation*}
    of $D$-submodules be a composition series of $M$, i.e., for each $i$, $M_{i+1}/M_i$ is irreducible. Then  
    \begin{enumerate}
        \item $\Orders_{D}(M)$ is defined as the sorted list of numbers\\ $\dim_{\mathbb{C}(x)}(M_{i+1}/M_{i})$. 
        \item $\AbsOrders(M)$ is defined as $\Orders_{D_t}(M\pda^{1}_{t})$ for a value of $t$ for which $M\pda^{1}_{t}$ has the maximum number of composition factors.
    \end{enumerate}
\end{definition}

Note that $\Orders_{D}(M)$ is an ordered partition of $\dim_{\mathbb{C}(x)}(M)$ and independent of the choice of composition series by the Jordan-Hölder theorem. 
Likewise, $\AbsOrders(M)$ is well-defined: two composition series for $M\pda^{1}_{s}$ and $M\pda^{1}_{t}$ maximizing the number of composition factors must restrict to two composition series of $M\pda^{1}_{st}$ by the maximality assumption (Any $D_s$-module is also a $D_{st}$-module). By the Jordan-Hölder theorem, their composition factors coincide, so $\Orders_{D_s}(M)=\Orders_{D_t}(M)$.

\begin{corollary} [of Theorem \ref{thm:restrict-to-p} below]
\label{cor:abs-ord}
If $M$ is an irreducible $D$-module of dimension $n$, then 
\begin{enumerate} 
    \item\label{item:abs-1} $\AbsOrders(M) = [m,\dots,m]$ ($k$ copies of $m$) for some $k,m$ with $km=n$. 
    Moreover, for any $t$ we have:
    \item\label{item:abs-2}  $\Orders_{D_t}(M) = [m,\dots,m]$ ($k$ copies of $m$) if and only if $k\mid t$. 
\end{enumerate} \end{corollary}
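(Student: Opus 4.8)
The plan is to study, for each $t \ge 1$, the integer $f(t)$ equal to the number of composition factors of $M\pda^1_t$ as a $D_t$-module, and to prove two things: that these factors always have the same $\mathbb{C}(x)$-dimension, and that $f(t)=\gcd(t,k)$ for a suitable $k$. The equal-dimension claim I would get from a semisimplicity-plus-transitivity argument. If $N\subseteq M$ is a minimal nonzero $D_t$-submodule, then $\tau N$ is again a $D_t$-submodule (because $\tau D_t = D_t\tau$, as $\tau$ commutes with $\tau^t$) and $\tau^t N = N$, so $\sum_{i=0}^{t-1}\tau^i N$ is nonzero, $\tau$-stable and $D_t$-stable, hence $D$-stable; irreducibility of $M$ forces it to equal $M$. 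Thus $M\pda^1_t=\sum_i \tau^i N$ is a sum of conjugate irreducible $D_t$-modules, so it is semisimple and every composition factor has dimension $\dim_{\mathbb{C}(x)} N =: m_t$. Consequently $\Orders_{D_t}(M\pda^1_t)=[m_t,\dots,m_t]$ with $f(t)$ copies and $m_t f(t)=n$ for \emph{every} $t$, so the list always has equal parts. Part (1) then reduces to setting $k=\max_t f(t)$ (attained since $f(t)\le n$) and $m=n/k$.

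Next I would track how $f$ grows along divisibility. For $s\mid t$ we have $D_t\subseteq D_s$, so $M\pda^1_t$ is a further restriction of $M\pda^1_s$. Transporting each $D_s$-composition factor to a $D$-module via the isomorphism $\psi_s$ of \eqref{isom-D-Dm} (under which $\psi_s^{-1}(D_t)=D_{t/s}$) and applying \Cref{thm:restrict-to-p}, each $D_s$-factor restricts to a semisimple $D_t$-module. By the transitivity above applied at level $s$, all $D_s$-factors are $\tau$-conjugate, and since conjugation by $\tau$ commutes with restriction to $D_t$, conjugate factors split into the same number of pieces; hence $f(t)/f(s)$ is a well-defined integer, $f$ is nondecreasing, and $f(s)\mid f(t)$. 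When $t/s=p$ is prime, \Cref{thm:restrict-to-p} together with \Cref{thm:orders}(4) gives $f(t)/f(s)\in\{1,p\}$, with the value $p$ possible only when $p\mid n$.

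Finally I would assemble these prime steps into $f(t)=\gcd(t,k)$, where $k=\max_t f(t)$. This needs two facts: that the $p$-part of $f$ \emph{saturates}, so that $v_p(f(t))=\min(v_p(t),c_p)$ for a constant $c_p$ once a prime step at $p$ fails to split; and that distinct primes contribute independently, giving $f(t)=\prod_p p^{\min(v_p(t),c_p)}=\gcd\bigl(t,\prod_p p^{c_p}\bigr)$ and hence $k=\prod_p p^{c_p}$. Granting this, $\Orders_{D_t}(M\pda^1_t)$ has exactly $k$ parts iff $f(t)=k=\gcd(t,k)$ iff $k\mid t$, which is part (2); and any $t$ with $k\mid t$ (for instance $t=k$) realizes $\AbsOrders(M)=[m,\dots,m]$ with $m=n/k$, finishing part (1). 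I expect the \textbf{main obstacle} to be exactly this last assembly: establishing the $p$-saturation and the cross-prime independence rigorously, since here one must control not merely \emph{whether} $M\pda^1_t$ is reducible (which \Cref{thm:orders} settles) but the precise multiplicity with which each prime contributes to the splitting, which is the Clifford-type content that must be extracted from \Cref{thm:restrict-to-p}.
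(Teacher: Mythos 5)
Your argument for part (1) is correct and takes a genuinely different route from the paper's. The paper proves (1) by induction on $\dim M$: it picks a prime $p\mid n$ with $M\pda^{1}_{p}$ reducible (\Cref{thm:orders}), invokes the decomposition $M\pda^{1}_{p}\cong N\oplus\tau(N)\oplus\cdots\oplus\tau^{p-1}(N)$ of \Cref{thm:restrict-to-p}, transports $N$ along $\psi_p$, and recurses. You instead get the key structural fact in one stroke and for every $t$ simultaneously: $M\pda^{1}_{t}=\sum_i \tau^i N$ for a minimal $D_t$-submodule $N$, so $M\pda^{1}_{t}$ is semisimple with all composition factors $\tau$-conjugate, hence of equal dimension. (The only unstated point is that $\tau$ acts injectively on $M$, needed for $\tau^t N\neq 0$; this holds because $\ker\tau$ is a $D$-submodule and $\tau M=0$ would force $\dim M=1$.) Your toolkit even yields the \emph{only if} half of part (2) formally, a shortcut you did not exploit: chaining your prime-step fact along any chain $1\mid q_1\mid q_1q_2\mid\cdots\mid t$ of divisors shows $f(t)\mid t$, so $f(t)=k$ forces $k\mid t$.

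The genuine gap is the \emph{if} direction of part (2), $k\mid t\Rightarrow f(t)=k$, and it cannot be closed with the facts you have established -- this is not merely a technical loose end, as the following shows. The function $f(t)=2^{\min(\max(v_2(t)-1,0),\,2)}$ (where $v_2$ is the $2$-adic valuation), with $n=8$, satisfies every constraint you derived: equal parts of dimension $n/f(t)$, $f(s)\mid f(t)$ for $s\mid t$, and prime-step ratios in $\{1,p\}$ occurring only for $p\mid n$. Yet it has $k=\max f=f(8)=4$ while $f(4)=2$, violating (2) at $t=4$. So your ``$p$-saturation'' is real module-theoretic content, not bookkeeping. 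What rules out such an $f$ is \Cref{thm:orders}(4) applied not only to $M$ but recursively to its composition factors: for instance $f(2)=1$ means $M\pda^1_2$ is irreducible, whence $M\pda^1_4$ is irreducible because $2$ is the only prime dividing $\gcd(4,n)$, so $f(4)=1$; deeper levels need the same argument applied to the factors transported along $\psi$. That recursive application is exactly what the paper's induction supplies: the full induction hypothesis (both parts, for all $t$) applied to the $(n/p)$-dimensional module $\psi_p^{-1}(N)$ unwinds the criterion as $k\mid t\Leftrightarrow k'\mid t/p$ one prime at a time, grounding out at absolutely irreducible modules. The paper also notes the corollary follows from Proposition~6.8 of \cite{Tsui2024eulerian}, which is precisely the statement $f(t)=\gcd(t,k)$ you were aiming for; to complete your write-up you must either prove that proposition (e.g.\ by combining your conjugation argument with a recursive use of \Cref{thm:orders}(4) on the transported factors) or restructure part (2) as the paper's induction on dimension.
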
 

\begin{proof} \hfill

\eqref{item:abs-1}: If $M$ is absolutely irreducible, then $m=n$ and $k=1$.
Otherwise, $M\pda^{1}_{p}$ is reducible for some prime factor $p$ of $n$ by \Cref{thm:orders}.
By \Cref{thm:restrict-to-p}, $M\pda^{1}_{p} \cong N\oplus\tau(N)\oplus\cdots\oplus \tau^{p-1}(N)$.
After applying the isomorphism in Equation~\eqref{isom-D-Dm}, we invoke induction to conclude that $\AbsOrders(N)=[m,\dots,m]$ ($k'$ copies of $m$) for some $m,k'$ with $k' m =n/p$.
Therefore, 
\[
    \AbsOrders(N\oplus\tau(N)\oplus\cdots\oplus \tau^{p-1}(N))= [m,\dots,m]\quad (k \text{ copies of } m)
\]
with $k=pk'$.

\eqref{item:abs-2}: Argue as in \eqref{item:abs-1}.
\end{proof}

\Cref{cor:abs-ord} also follows from \cite[Proposition 6.8]{Tsui2024eulerian}.

\section{Another Algorithm for Absolute factorization}

\label{anotheralg} 

Implementation is in \cite{algo}. Let $\zeta_{p}$ be a primitive $p$th root of unity.

\textbf{Algorithm:} \texttt{AbsIrreducibility}\\
\textbf{Input:}  An irreducible operator $L\in D$.\footnoteref{foot:note1}\\
\textbf{Output:} \texttt{True} if $L$ is absolutely irreducible; otherwise \texttt{False}.
\begin{enumerate}
\item
For each prime $p\mid\ord(L)$:
    \begin{enumerate}
        \item Compute $\widetilde{L}\coloneqq L \vert_{\tau \mapsto \tau/\zeta_p}$ which equals $L \cs (\tau-\zeta_{p})$.
        \item Compute the set $\Hom_{D}(D/D\widetilde{L}, D/DL)$ using \cite{barkatou2024hypergeometric,Imp}.\\
        If this set is nontrivial, return \texttt{False}.
    \end{enumerate}
\item Return \texttt{True}.
\end{enumerate}

Correctness of this algorithm follows from

\begin{theorem} \label{thm:restrict-to-p}
    Let $L$ be irreducible in $D$. Take $M = D/DL$ and let $p$ be a prime.
    Then the following are equivalent: 
    \begin{enumerate} 
        \item\label{item:restrict-to-p:1} $L$ is gauge equivalent to an element of $D_p$;
        \item\label{item:restrict-to-p:2} $L$ is gauge equivalent to $L \cs (\tau - \zeta_p)$;
        \item\label{item:restrict-to-p:3} $M \cong M \otimes D/D(\tau - \zeta_p)$ as $D$-modules; 
        \item\label{item:restrict-to-p:4} $M\pda^{1}_{p}$ is reducible.
        \item\label{item:restrict-to-p:5} $M\pda^{1}_{p}\cong N\oplus\tau(N)\oplus\cdots\oplus\tau^{p-1}(N)$ for some irreducible $D_{p}$-submodule $N$ of $M\pda^{1}_{p}$.
    \end{enumerate} 
\end{theorem}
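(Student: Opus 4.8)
The plan is to prove the five statements equivalent by establishing the cycle $(1)\Rightarrow(5)\Rightarrow(4)\Rightarrow(1)$, together with $(1)\Rightarrow(3)\Rightarrow(5)$ and $(2)\Leftrightarrow(3)$. Throughout I would lean on two standing facts. First, since $L$ is irreducible, the contrapositive of \Cref{lemma:reducible-operator} gives $\End_D(M)=\mathbb C$. Second, $M\otimes D/D(\tau-\zeta_p)$ is simply $M$ with its $\tau$-action rescaled by $\zeta_p$; I will write this as $M^{(\zeta_p)}$. Since $(\zeta_p\tau)^p=\tau^p$, restricting to $D_p$ undoes the twist, so $M^{(\zeta_p)}\pda^{1}_{p}=M\pda^{1}_{p}$, and for any $\widehat L\in D_p$ the twist of $D/D\widehat L$ is again $D/D\widehat L$ (the substitution $\tau\mapsto\tau/\zeta_p$ from Step~1a of \texttt{AbsIrreducibility} fixes every $\tau^{p}$-term).

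For $(2)\Leftrightarrow(3)$ I would combine \Cref{def:symm-prod} with \Cref{lemma:gauge}. By definition $\widetilde L=L\cs(\tau-\zeta_p)$ is the minimal operator of $1\otimes1$ in $M\otimes D/D(\tau-\zeta_p)$; since tensoring the cyclic module $M$ with the one-dimensional $D/D(\tau-\zeta_p)$ keeps $1\otimes1$ a cyclic vector, $D/D\widetilde L\cong M\otimes D/D(\tau-\zeta_p)$. Hence $L$ is gauge equivalent to $\widetilde L$ exactly when $M\cong M\otimes D/D(\tau-\zeta_p)$, which is $(2)\Leftrightarrow(3)$.

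The clean half of the remaining work is $(1)\Rightarrow(3)\Rightarrow(5)\Rightarrow(4)$ and $(1)\Rightarrow(5)$. For $(1)\Rightarrow(5)$ I write $D=\bigoplus_{i=0}^{p-1}\tau^iD_p$ as a left $D_p$-module; if $M\cong D/D\widehat L$ with $\widehat L\in D_p$, then $D\widehat L=\bigoplus_i\tau^i(D_p\widehat L)$, so $M\pda^{1}_{p}\cong\bigoplus_{i=0}^{p-1}\tau^iN$ with $N=D_p/D_p\widehat L$, and $N$ is irreducible since otherwise $\bigoplus_i\tau^iN'$ would be a proper nonzero $D$-submodule of the irreducible $M$. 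Implication $(1)\Rightarrow(3)$ is the observation above that twisting fixes $D/D\widehat L$. For $(3)\Rightarrow(5)$, an isomorphism $M\cong M^{(\zeta_p)}$ is a $\mathbb C(x)$-linear bijection $\Theta\colon M\to M$ with $\Theta\tau=\zeta_p\tau\Theta$; then $\Theta$ commutes with $\tau^p$ and with $\mathbb C(x)$, so $\Theta\in\End_{D_p}(M\pda^{1}_{p})$, while $\Theta^p\in\End_D(M)=\mathbb C$. After rescaling so that $\Theta^p=\mathrm{id}$, the map $\Theta$ is diagonalizable with $p$-th-root-of-unity eigenvalues, its eigenspaces $M_j$ are $D_p$-submodules, and $\Theta(\tau m)=\zeta_p^{\,j+1}\tau m$ for $m\in M_j$ shows $\tau$ cyclically permutes them; taking $N=M_0$ gives $(5)$, with $N$ irreducible by the same argument as before, and $(5)\Rightarrow(4)$ is immediate.

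The main obstacle is closing the cycle with $(4)\Rightarrow(1)$, i.e.\ extracting the structured $p$-fold decomposition from mere reducibility of $M\pda^{1}_{p}$; I expect this Clifford-theoretic step to be the crux. I would choose an irreducible $D_p$-submodule $N\subseteq M\pda^{1}_{p}$; since $\tau$ is a semilinear automorphism permuting the $D_p$-submodules, $\sum_{i=0}^{p-1}\tau^iN$ is nonzero, $\tau$- and $D_p$-stable, hence $D$-stable, so it equals $M$. The delicate part is that this sum is \emph{direct with exactly $p$ summands}, i.e.\ that no constituent is repeated. I would rule out higher multiplicity using $\End_D(M)=\mathbb C$ together with $\End_D(M)=\End_{D_p}(M\pda^{1}_{p})^{\langle\tau\rangle}$: a repeated constituent would force $\End_{D_p}(M\pda^{1}_{p})$ to contain a matrix block whose $\tau$-invariants have dimension $>1$, contradicting $\End_D(M)=\mathbb C$. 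Granting multiplicity one, the period of $N$ under $\tau$ divides the prime $p$; period $1$ would make $N$ a $D$-submodule, forcing $N=M$ and contradicting reducibility, so the period is $p$ and $\dim N=n/p$. Then a cyclic generator of $N$ over $D_p$ also generates $M$ over $D$, its minimal operator over $D$ is the monic form of the corresponding $\widehat L\in D_p$, and \Cref{lemma:gauge} yields $(1)$. Alternatively this implication may be cited from \cite[Corollary 6.6]{Tsui2024eulerian}.
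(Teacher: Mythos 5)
Your proposal is logically complete (the implications $(4)\Rightarrow(1)\Rightarrow(5)\Rightarrow(4)$, $(1)\Rightarrow(3)\Rightarrow(5)$, $(2)\Leftrightarrow(3)$ do cover all five statements) and it takes a genuinely different route from the paper. The paper proves the single cycle $(1)\Rightarrow(2)\Rightarrow(3)\Rightarrow(4)\Rightarrow(5)\Rightarrow(1)$, getting $(3)\Rightarrow(4)$ from the composite isomorphism $M\cong M\otimes D/D(\tau-\zeta_p)\cong M\otimes D/D(\tau-1)\cong M$ together with \Cref{lemma:reducible-operator}, and it handles the hard step---passing from mere reducibility of $M\pda^{1}_{p}$ to the structured $p$-fold decomposition---by citing \cite[Theorem 6.5]{Tsui2024eulerian}. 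You instead do that work yourself, twice: your $(3)\Rightarrow(5)$ (normalize the twisting isomorphism $\Theta$ using $\Theta^p\in\End_D(M)=\mathbb{C}$, decompose $M$ into $\Theta$-eigenspaces, observe $\tau$ permutes them cyclically) is exactly the mechanism the paper only deploys algorithmically after the theorem (its $G/\sqrt{c}$ normalization and the maps $1\pm G$ for $p=2$), generalized to all $p$; and your $(4)\Rightarrow(1)$ is a Clifford-theoretic argument the paper never attempts. What the paper's route buys is brevity, at the cost of leaning on the companion paper; what yours buys is self-containedness, at the cost of length and of needing more ring-theoretic machinery.

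The one thin spot is precisely where you predicted: the multiplicity-one claim inside $(4)\Rightarrow(1)$. Your one-sentence justification (``a repeated constituent would force a matrix block whose $\tau$-invariants have dimension $>1$'') asserts a true statement, but completing it requires three ingredients you do not supply: (i) the $\tau$-conjugation orbits of isotypic components of the semisimple module $\sum_i\tau^i N$ yield $\tau$-invariant idempotents, so $\End_D(M)=\mathbb{C}$ forces transitivity, and $\tau^p(W)\subseteq W$ then forces the number of components $d$ to divide $p$, i.e.\ $d\in\{1,p\}$ (when $d=p$, multiplicity one follows by counting length $\le p$); (ii) in the $d=1$ case, Schur's lemma in the strong form $\End_{D_p}(S)=\mathbb{C}$ for $S$ irreducible, which needs that the characteristic polynomial of such an endomorphism is $\tau^p$-invariant, hence lies in $\mathbb{C}[T]$, and that $\mathbb{C}$ is algebraically closed; (iii) Skolem--Noether, so that conjugation by $\tau$ on $M_e(\mathbb{C})$ is inner and its fixed algebra is the centralizer of a matrix, of dimension $\ge e\ge 2$, contradicting $\End_D(M)=\mathbb{C}$. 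Two smaller points: your fallback citation is off---the statement you would want to cite is \cite[Theorem 6.5]{Tsui2024eulerian} (the induced-module result the paper itself uses for $(4)\Rightarrow(5)$), whereas \cite[Corollary 6.6]{Tsui2024eulerian} is the reduction-to-primes statement used in \Cref{thm:orders} and does not give $(1)$; and both your period argument and your eigenspace step silently use that $\tau$ acts bijectively on $M$, which is true (the trailing coefficient of $L$ is nonzero) but worth one line.
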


\begin{proof} \hfill

\eqref{item:restrict-to-p:1} $\Rightarrow$ \eqref{item:restrict-to-p:2}: 
Suppose that $L$ is gauge equivalent to $L'\in D_p$. 
Then $L$ is also gauge equivalent to $L \cs(\tau - \zeta_p)$ since $L' = L' \cs (\tau - \zeta_p)$.

\eqref{item:restrict-to-p:2} $\Rightarrow$ \eqref{item:restrict-to-p:3}: 
$L \cs (\tau - \zeta_p)$ is the minimal operator for $1 \otimes 1$ in $M \otimes D/D(\tau - \zeta_p)$.

\eqref{item:restrict-to-p:3} $\Rightarrow$ \eqref{item:restrict-to-p:4}: 
$D/D(\tau - \zeta_p)$ and $D/D(\tau-1)$ are isomorphic as $D_p$-modules but not as $D$-modules.
The composite $D_{p}$-module isomorphism 
$$
    M\cong M\otimes D/D(\tau-\zeta_{p})\cong M\otimes D/D(\tau-{1})\cong M
$$
is therefore nontrivial. So $M\pda^{1}_{p}$ is reducible by \Cref{lemma:reducible-operator}. 

\eqref{item:restrict-to-p:4} $\Rightarrow$ \eqref{item:restrict-to-p:5}:
Since the prime $p$ has no proper divisors, by \cite[Theorem 6.5]{Tsui2024eulerian}, $M\cong D\otimes_{D_p}N$ for some irreducible $D_{p}$-module $N$. 
Now use the fact that $D$ is a free left $D_{p}$-module with basis $1,\tau,\tau^{2},\dots,\tau^{p-1}$.

\eqref{item:restrict-to-p:5} $\Rightarrow$ \eqref{item:restrict-to-p:1}:
By the cyclicity of the direct sum decomposition, a cyclic vector $u$ of $N$ is also cyclic for $M$ with minimal operator in $D_{p}$.
\end{proof}

If $L$ is not absolutely irreducible, we can find a factorization
by adjusting \texttt{AbsIrreducibility} with the proof of \cite[Theorem 6.4]{Tsui2024eulerian}. We illustrate this for $p=2$. 

Suppose $\Hom_{D}(D/D\widetilde{L}, D/DL)$ from Step~1b has a nonzero element $G$. Then nontrivial factors of $L\pda^1_2$ in $D_2$ are constructed as follows: 
\begin{enumerate}
    \item Let $c$ be the remainder of $\widetilde{G} G \pmod{DL}$
    where $\widetilde{G} = G\vert_{\tau \mapsto -\tau} \ \in \Hom_{D}(D/D{L}, D/D\widetilde{L})$.
    Since $L$ is irreducible, $c$ must be a constant by \Cref{lemma:reducible-operator}. 
    (This $c$ is the same as in the proof of Theorem $6.5$ in \cite{Tsui2024eulerian}).
    \item Replace $G$ with ${G}/{\sqrt{c}}$.
    \item Now $1 + G$ resp. $1-G$ are maps from $V(L)$ to $V(L) + V(\tilde{L}) = V(L\pda^1_2)$. The images of these maps are solution spaces of right-factors of $L\pda^1_2$. Remark~\ref{remark:gauge} shows how to compute them, which, due to $\tau \mapsto -\tau$ symmetry, will be in $D_2$. This way the Hom computation in Step 1(b) replaces a call to \texttt{RightFactors}. 
\end{enumerate}

This $1+G$ is the same as $\Psi$ in the proof of \cite[Theorem 6.5]{Tsui2024eulerian}. 
For $p>2$, \texttt{AbsIrreducibility} and its extension above require the use of the field extension $\mathbb{Q}(\zeta_p)/\mathbb{Q}$. This may impact the efficiency of \texttt{AbsIrreducibility} compared to that of \texttt{AbsFactorization}.

If $p = {\rm ord}(L)$ then \texttt{AbsFactorization} becomes equivalent to computing Liouvillian solutions, for which improvements were given in~\cite{Liouv2009}. Our next goal is to implement similar improvements in \texttt{AbsFactorization}, by modifying \texttt{RightFactors}, and then compare the implementations. The results will be posted on our websites~\cite{algo}.

\section{Solving third order Equations in Terms of second order equations}

As explained in the introduction, to solve third order equations in terms of second order equations, it suffices to cover cases (R), (L) and ($S^2$). Cases (R) and (L) are handled by (absolute) factorization, so we focus on case ($S^2$). This can be broken down into two steps:
\begin{enumerate}
    \item Find, if it exists, a gauge transformation $G$ from $L_3$ to a third order operator $L_G$ for which $\ord (L_G^{\cs 2})=5$. This is done in \Cref{subsection:reduce-order}.
    \item Decompose $L_{G}$ as $L_2^{\cs 2}\cs (\tau-r)$ using case~(c) in \Cref{thm:symmetric-square-invariant}(3). (Cases (a) and (b) in \Cref{thm:symmetric-square-invariant}(3) are already covered by cases (L) and (R)).
\end{enumerate}

\subsection{Simpler Case}
\label{subsection:simpler-case}

The easiest sub-case of case ($S^2$) in Section 1 is when that gauge-equivalence is simply an equality: $L_3 = L_2^{\cs 2}\cs (\tau-r)$.
This section shows how to detect this sub-case, and if so, how to find $L_2$ and $r$.

\begin{theorem}\label{thm:symmetric-square-invariant}
 Let $L_3=\tau^3+c_2\tau^2+c_1\tau+c_0$ be a third order difference operator over $\mathbb{C}(x)$. The following are equivalent. 
\begin{enumerate}
\item \label{thm:invar1} The element $I(c_0,c_1,c_2)$ given by
    \begin{equation}
\begin{aligned}
& + \tau^2c_0 \cdot c_1 \cdot c_2\cdot (\tau c_2)^2\cdot \tau^2 c_2 
& -\tau^2c_0\cdot  c_1 \cdot \tau^2 c_1 \cdot c_2\cdot \tau c_2\\
& - \tau^2c_{0} \cdot c_{2} \cdot \tau c_{2}\cdot \tau c_{0} \cdot \tau^2c_{2} 
& -\left(\tau c_{1}\right)^2 \cdot c_{1}\cdot \tau^2 c_{1} \cdot \tau^2 c_{2}\\
& + \tau c_{0}\cdot \tau c_{1} \cdot \tau^2 c_{1} \cdot c_{2} \cdot \tau^2 c_{2} 
& +\tau^2 c_{0} \cdot \tau c_{1} \cdot c_{1} \cdot \tau^2 c_{1}
\end{aligned}
\end{equation}
is zero in $\mathbb{C}(x)$.
\item \label{thm:invar2}  The operator $L_3^{\cs 2}$ has order $\le 5$.
 \item\label{thm:invar3} One of the following three cases holds.
    \begin{enumerate}
    
\item $L_3 = \tau^3+c_0$, with $c_0 \neq 0$;

\item $L_3 = (\tau+c_2)\circ (\tau^2+\tau^{-1}(c_1))$;

\item Let $L_2 = \tau^2+a\tau+b$.
For some $a,r\in \mathbb{C}(x)^*$ and $b \neq 1$, we have
\begin{equation}\label{eq:symmetric-square-term-equivalence} L_3 = L_2^{\cs 2}\cs (\tau-r). \end{equation}

In this case we may choose
\begin{equation*}
a=1,\quad b = \dfrac{
\begin{vmatrix}
1 & \frac{\tau^{-1}(c_2) \, c_2}{c_1}\\
1 & 1
\end{vmatrix}
}{
\begin{vmatrix}
1 & \frac{\tau^{-1}(c_2) \, c_2}{c_1}\\
\frac{c_1 \, \tau^{-1}(c_1)}{c_0 \, \tau^{-1}(c_2)} & 1
\end{vmatrix}
},\quad r = \dfrac{\tau^{-2}(c_2)}{\tau^{-1}(b) - 1}.
\end{equation*}
    \end{enumerate}
    \end{enumerate}
\end{theorem}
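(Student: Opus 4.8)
The plan is to prove (1) $\Leftrightarrow$ (2) by a direct computation and to organize (2) $\Leftrightarrow$ (3) around the geometry of the quadratic relation that an order drop forces. Let $V = V(L_3)$ with $\mathbb{C}$-basis $y_1,y_2,y_3$. Since $V(L_3^{\cs 2}) = \spann_{\mathbb{C}}\{y_iy_j : i\le j\}$, we have $\ord(L_3^{\cs 2}) = \dim_{\mathbb{C}}\spann_{\mathbb{C}}\{y_iy_j\}$, generically $6$. Hence statement (2) holds iff these six products are linearly dependent, equivalently iff $\bar 1,\tau\bar 1,\dots,\tau^5\bar 1$ (with $\bar 1 = 1\otimes 1$) are $\mathbb{C}(x)$-dependent in $(D/DL_3)\otimes(D/DL_3)$. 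Expanding each $\tau^k(1\otimes 1)$ in a fixed basis of the symmetric-square module via $\tau^3 \equiv -c_2\tau^2 - c_1\tau - c_0 \pmod{L_3}$ turns this into the vanishing of a $6\times 6$ determinant in the $c_i$ and their $\tau$-shifts. I would then check that this determinant equals $I(c_0,c_1,c_2)$ up to a nonzero rational factor, which gives (1) $\Leftrightarrow$ (2); the step is mechanical, but identifying the essential content of the determinant with the displayed six-term expression is the computational crux of that half.

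\textbf{The easy direction (3) $\Rightarrow$ (2).}
I would compute $\ord(L_3^{\cs 2})$ in each case. In case (a), if $\tau^3 y = -c_0 y$ then $\tau^3(y_iy_j) = c_0^2\,y_iy_j$, so every product is annihilated by $\tau^3 - c_0^2$ and $\ord(L_3^{\cs 2}) \le 3$. In case (c), write $V = \spann\{gz_1^2,\,gz_1z_2,\,gz_2^2\}$ with $z_1,z_2$ spanning $V(L_2)$ and $\tau g = rg$; the six pairwise products equal $g^2$ times the five binary quartics $z_1^4,z_1^3z_2,z_1^2z_2^2,z_1z_2^3,z_2^4$, and the Veronese relation $(gz_1^2)(gz_2^2) = (gz_1z_2)^2$ is a genuine dependence, so $\ord(L_3^{\cs 2}) \le 5$; the order-one factor $\tau-r$ only scales uniformly by $g^2$ and does not change the count. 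Case (b) is reducible and is settled by expanding the product form $(\tau+c_2)\circ(\tau^2+\tau^{-1}(c_1))$ directly.

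\textbf{The main obstacle: (2) $\Rightarrow$ (3).}
From an order drop I obtain a nonzero symmetric matrix $\Lambda = (\lambda_{ij})$ with $\sum\lambda_{ij}y_iy_j = 0$ in $S$; diagonalizing over $\mathbb{C}$ rewrites this as $\sum_k \epsilon_k w_k^2 = 0$ with $w_k\in V$ and the number of terms equal to $\operatorname{rank}\Lambda$. A rank-one relation would force $w_k^2 = 0$, hence $w_k = 0$, contradicting independence, so $\operatorname{rank}\Lambda\in\{2,3\}$. I expect rank $3$ (a smooth conic relation) to place the solution curve $x\mapsto[w_1(x):w_2(x):w_3(x)]$ on a Veronese $\mathbb{P}^1\hookrightarrow\mathbb{P}^2$, producing $z_1,z_2,g$ and case (c); rank $2$ forces a relation $w_1^2 = w_2^2$, i.e.\ disjoint-support/interlacing behavior, which yields the reducible and Liouvillian forms (b) and (a). The delicate points are that the conic is defined over $\mathbb{C}(x)$ and its parametrization must be realized as an actual gauge transformation with coefficients in $\mathbb{C}(x)$ (a rationality/descent step), and that one must account for the full kernel of the multiplication map, since case (a) carries a whole space of relations; separating the rank-two stratum cleanly into (a) versus (b) is therefore part of the work.

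\textbf{Producing the explicit data.}
I would close the loop by normalizing $a = 1$ and solving $L_2^{\cs 2}\cs(\tau-r) = L_3$ coefficient by coefficient, which expresses $b$ and $r$ as the displayed rational functions of $c_0,c_1,c_2$, with $I = 0$ as the exact consistency condition; the loci where these formulas degenerate ($c_1 = 0$, $b = 1$, or the denominator of $r$ vanishing) are precisely where one lands in case (a) or (b). Verifying these formulas and matching their breakdown loci to (a)/(b) is routine but bookkeeping-heavy, and together with the rationality step in the rank-$3$ case constitutes the real substance of the theorem. The whole result mirrors the Singer--Ulmer classification of third-order differential operators whose symmetric square drops order, the difference being exactly the additional order-one symmetric factor $\tau - r$ highlighted in the abstract.
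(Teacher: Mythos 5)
Your (1) $\Leftrightarrow$ (2) computation and your proof of (3) $\Rightarrow$ (2) are fine and essentially match the paper (the paper verifies by computer that the $\tau^6$-coefficient of $L_3^{\cs 2}$ is exactly $I$). The genuine gap is in the route you designate as the main substance of (2) $\Rightarrow$ (3): the rank stratification of the quadratic relation. The dichotomy ``rank $3$ $\Rightarrow$ case (c), rank $2$ $\Rightarrow$ cases (a)/(b)'' is false in the difference setting. Take $L_3=\tau^3-c_0$, which is in case (a): the recurrence $y(x+3)=c_0(x)y(x)$ propagates the three residue classes mod $3$ independently, so $V(L_3)$ has a basis $y_0,y_1,y_2$ of interlacing sequences with pairwise disjoint support, hence $y_iy_j=0$ for $i\neq j$. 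The kernel of the multiplication map then contains $y_0y_1+y_0y_2+y_1y_2$, whose Gram matrix
\[
\begin{bmatrix} 0 & 1/2 & 1/2\\ 1/2 & 0 & 1/2\\ 1/2 & 1/2 & 0\end{bmatrix}
\]
has determinant $1/4\neq 0$: a rank-$3$ (smooth conic) relation carried by a purely Liouvillian operator. So lying on a smooth conic does not produce case (c); the pointwise Veronese lifting exists, but the lifted data is interlacing-type and never descends to $\mathbb{C}(x)$. This means the ``rationality/descent step'' you flagged as delicate is not a deferrable technicality --- it is simply false without extra hypotheses, because the difference Galois group can be disconnected (interlacing), which is exactly where the difference case departs from the Singer--Ulmer differential picture. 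Repairing it would amount to redoing the Galois-theoretic classification of \cite{Tsui2024eulerian}, and even that machinery only yields gauge equivalence, whereas case (3)(c) asserts an exact equality $L_3=L_2^{\cs 2}\cs(\tau-r)$ with explicit formulas for $b$ and $r$.

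The good news is that your final paragraph already contains a complete, geometry-free proof, and it is in fact the paper's proof; you only need to promote it from ``producing the explicit data'' to being the argument. Since (1) $\Leftrightarrow$ (2) is a direct computation, it suffices to prove (1) $\Leftrightarrow$ (3) by case analysis on the coefficients alone: if $c_1$ or $c_2$ vanishes, then inspection of $I$ shows $I=0$ forces $c_1=c_2=0$, which is case (a); if $c_0=\tau^{-1}(c_1)c_2$, then $I=0$ identically and $L_3=(\tau+c_2)\circ(\tau^2+\tau^{-1}(c_1))$, which is case (b); otherwise normalize $a=1$ (by replacing $L_2$ with $L_2\cs(\tau-1/a)$ and rescaling $r$), match the coefficients of $\tau^2,\tau,1$ in $L_2^{\cs 2}\cs(\tau-r)=L_3$ to get a system in $r,b,\tau(b)$, solve the two $r$-free equations by Cramer's rule (the hypothesis $c_0\neq\tau^{-1}(c_1)c_2$ is exactly $\det A\neq 0$), and check that the single consistency condition for these formulas has numerator $I(c_0,c_1,c_2)$. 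No solution spaces, ranks of quadratic forms, conics, or descent arguments enter anywhere, and the degenerate loci you mention are handled up front as cases (a) and (b) rather than recovered afterwards as breakdown loci of the formulas.
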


\begin{proof}
A computer verification shows the $\tau^6$-coefficient of $L_3^{\cs 2}$ is $I(c_0,c_1,c_2)$, so \eqref{thm:invar1} and \eqref{thm:invar2} are equivalent. We now take the three special cases and study them.

Case (a). Suppose $c_1$ or $c_2$ is zero. By inspection, we see that $I$ is zero precisely when $c_1=c_2=0$, i.e., when $L_3 = \tau^3+c_0$.

Case (b). Suppose $c_0 = \tau^{-1}(c_1)c_2$. Then $I=0$ and $L_3$ factors as shown.

Case (c). Suppose that $c_1,c_2\neq 0$ and $c_0 \neq \tau^{-1}(c_1)c_2$. We consider when \eqref{eq:symmetric-square-term-equivalence} holds. Now $a \neq 0$, otherwise
the right side of \eqref{eq:symmetric-square-term-equivalence} would have order $2$. Replacing $L_2$ with
\[
    L_2 \cs (\tau - 1/a) = \tau^2+\tau +b'
\]

and multiplying $r$ by $a^2$
does not change the right side of \eqref{eq:symmetric-square-term-equivalence}, so we may further assume $a=1$. 

Matching coefficients in \eqref{eq:symmetric-square-term-equivalence} now gives a system of equations
\begin{equation}\label{eq:system}
    \begin{aligned}
    {[\tau^2]}&:& c_2 &= \tau^2(r)[\tau(b)-1], \\
    {[\tau]}&:& c_1 &= -\tau(r)\tau^2(r)\tau(b)[\tau(b)-1],\\
    {[1]}&:& c_0 &= -r \, \tau(r) \tau^2(r) \, b^2 \, \tau(b).
    \end{aligned}
\end{equation}
Since $c_1,c_2\neq 0$, \eqref{eq:system} is equivalent to
\begin{equation}
\label{eq:system2}
    \begin{aligned}
    {[\tau^2]}&:& c_2 &= \tau^2(r)[\tau(b)-1], \\
    \frac{\tau^{-1}[\tau^2]\cdot [\tau^2]}{[\tau]}&:& \frac{\tau^{-1}(c_2) \, c_2}{c_1} &= \frac{1-b}{\tau(b)}, \\
    \frac{[\tau]\cdot\tau^{-1}[\tau]}{[1]\cdot\tau^{-1}[\tau^2]}&:& \frac{c_1\,\tau^{-1}(c_1)}{c_0 \, \tau^{-1}(c_2)} &= \frac{1-\tau(b)}{b}.
    \end{aligned}    
\end{equation}
The last two equations can be written as the linear system
\begin{equation*}\label{eq:matrix-eq}
A
\begin{bmatrix}
b\\ \tau(b)
\end{bmatrix}
=
\begin{bmatrix}
1\\1
\end{bmatrix},\qquad
A=\begin{bmatrix}
1 & \dfrac{\tau^{-1}(c_2) \, c_2}{c_1}\\
\dfrac{c_1 \, \tau^{-1}(c_1)}{c_0 \, \tau^{-1}(c_2)} & 1
\end{bmatrix}.
\end{equation*}

Since $c_0 \neq \tau^{-1}(c_1)c_2$, we have $\det A\neq 0$. Cramer's rule now gives 
\begin{equation}
b = \frac{1}{\det(A)}
\begin{vmatrix}
1 & \frac{\tau^{-1}(c_2) \, c_2}{c_1}\\
1 & 1
\end{vmatrix},
\quad
\tau(b) = \frac{1}{\det(A)}
\begin{vmatrix}
1 & 1\\
\frac{c_1 \, \tau^{-1}(c_1)}{c_0 \, \tau^{-1}(c_2)} & 1
\end{vmatrix}.\label{eq:cramers}
\end{equation}

Consistency of \eqref{eq:system} is now equivalent to the condition $\tau(f) -g = 0$, where $f$ and $g$ are the right sides in \eqref{eq:cramers}. But the numerator of $\tau(\tau(f) -g)$, written as a rational function of $\tau^i(c_j)$, is precisely $I(c_0,c_1,c_2)$. Therefore, \eqref{eq:symmetric-square-term-equivalence} holds precisely when $I(c_0,c_1,c_2)=0$.

Finally, the expression for $b$ appearing in the statement of this theorem is $f$ here as we mentioned before, and $r$ is written in terms of $b$ via the first equation in \eqref{eq:system}.\end{proof}

\subsection{Algorithm} \label{subsection:reduce-order}

\subsubsection{The gauge transformation $G$}
\label{prerequisite}
Consider an irreducible third order operator $L_3\in \mathbb{C}(x)[\tau]$.
For any nonzero $G = b_0 + b_1 \tau + b_2 \tau^2 \in \mathbb{C}(x)[\tau]$, $G(V(L_3))$ is the solution space of some third order operator that we will denote as $L_G$. 
Then $L_G G$ is right divisible by $\LCLM(L_3,G)$ and hence equals it (by comparing orders). 
Therefore $L_{G}$ equals $\LCLM(L_3, G)$ right-divided by $G$.

In the algorithm below, we initially do not know the correct values of $b_0, b_1, b_2$ in $\mathbb{C}(x)$.
Therefore in Step~5, $b_0, b_1, b_2$ are first viewed as variables, and $G$ is in $\mathbb{C}(x)[b_0,b_1,b_2][\tau]$. Now $G$ induces a map 
\begin{equation}
    G_2: V(L_3^{\cs 2}) \to V(L_G^{\cs 2}). \label{G2}
\end{equation}
We can write $G_2 = \sum_{i=0}^5 a_i \tau^i$ with $a_i\in \mathbb{C}(x)$. This $G_2$ needs to meet the following requirement: If $u \in V(L_3)$, then $u^2 \in V(L_3^{ \cs 2})$, and with $G$ sending $u$ to $G(u)$, we need $G_2$ to send $u^2$ to $G(u)^2$. The latter is a quadratic polynomial in $b_0, b_1, b_2$ over $R$, where $R$ is defined to be $\mathbb{C}(x)[u^2, u\tau(u), \ldots, \tau^2(u)^2]$.

The inclusion
$$\mathbb{C}(x)[ u^2, \tau(u)^2, \ldots, (\tau^5(u))^2]  \subseteq R$$ is computed in Step~4 of the algorithm below. It allows us to write $G_2( u^2 )$ as an $R$-linear combination of $a_0, \ldots, a_5$. We then equate that to $G(u)^2$, which is is an $R$-linear combination of $b_0^2, b_0b_1,\dots,b_2^2$. Taking coefficients with respect to $u^2,u\tau(u),\dots,\tau^2(u)^2$ gives six $\mathbb{C}(x)$-linear relations between the $a_i$ and $b_0^2,b_0b_1,\dots,b_2^2$.
Solving those writes each $a_i$ as a quadratic polynomial in $b_0,b_1,b_2$ over $\mathbb{C}(x)$. In this way, we obtain an expression for $G_2$ in $\mathbb{C}(x)[b_0,b_1,b_2][\tau]$ that is quadratic in $b_0,b_1,b_2$.

If~(\ref{G2}) is not injective, then its image has dimension $\le 5$ which is the case handled in \Cref{subsection:simpler-case}. That relates $L$ to an operator $L_G$ for which \Cref{thm:symmetric-square-invariant}(\ref{thm:invar2}) applies.
So we need a subspace $V(L_1) \subset V(L_3^{\cs 2})$ in the kernel of $G_2$. This means that the remainder of $G_2$ right-divided by $L_1$ needs to be 0. This $L_1$ will have order~1, and the remainder will be a quadratic polynomial in $b_0,b_1,b_2$ over $\mathbb{C}(x)$. Next we have to find a nontrivial solution for that quadratic polynomial (i.e., find a point on a conic over $\mathbb{C}(x)$). Substituting that solution into $G$ ensures that this remainder is zero, so that~(\ref{G2}) is not injective.

\subsubsection{Algorithm Steps}\hfill

Implementation is in \cite{algo}.

\textbf{Algorithm:} \texttt{ReduceOrder}\\
\textbf{Input:} An absolutely irreducible third order operator $L_3 \in D$.\\
\textbf{Output:} 
$(G,\widetilde{G},L_{2},r)$ if $L_{3}$ is $2$-solvable, where $G$ is a gauge transformation, $\widetilde{G}$ is the inverse of $G$ and sends $L_3$ to an operator of the form~(\ref{eq:symmetric-square-term-equivalence}), and $L_2$ and $r$ are from~\eqref{eq:symmetric-square-term-equivalence}. Otherwise: \texttt{Not\,2-solvable}.
\begin{enumerate}
\item $L_6 \coloneqq  L_3^{\cs 2}$.
\item If $\ord(L_6)=5$, then return $G = 1$, $\widetilde{G}=1$ and return $L_2, r$ from \Cref{thm:symmetric-square-invariant}~\eqref{thm:invar3}~(c).

\item Now $\ord(L_6)=6$. If it exists, let $L_1$ be an order-1 right-factor of $L_6$. Otherwise: return \ \texttt{Not\,2-solvable}.

\item\label{step4} Let $u$ be a symbolic solution of $L_3$, which means that $u(x), u(x+1),u(x+2)$ are symbols, and that we write $u(x+i),i \geq 3$ as $\mathbb{C}(x)$-linear combinations of $u(x), u(x+1),u(x+2)$ in such a way that $L_3(u)=0$.
\item Let $G = b_0 + b_1 \tau + b_2 \tau^2$ and $G_2 = a_0 \tau^0 + \cdots + a_5 \tau^5$ where the $a_i$ and $b_i$ are new variables. Now compute $G_2(u(x)^2)$ and write it as a $\mathbb{C}(x)[a_0,\ldots,a_5]$-linear combination of $u(x)^2, u(x)u(x+1),\ldots,u(x+2)^2$.
\item\label{step6} Compute $G_2(u(x)^2)-(G(u(x)))^2$. Let $S$ be the set of its 6 coefficients as a quadratic polynomial in $u(x),u(x+1),u(x+2)$.
\item Solve $S$ to write $a_0,\ldots,a_5$ as quadratic polynomials in $b_0,b_1,b_2$ over $\mathbb{C}(x)$. Substitute this in $G_2$ so that $G_2 \in \mathbb{C}(x)[b_0,b_1,b_2][\tau]$.
\item  Let $C \in \mathbb{C}(x)[b_0, b_1, b_2]$ be the remainder of $G_2$ right-divided by $L_1$. This $C$ is a {\em conic} (quadratic homogeneous polynomial in 3 variables).
\item Simplify conic $C$ with linear transformations to a conic of the form $c_1 X_1^2 + c_2 X_2^2 + c_3 X_3^2 \in \mathbb{C}(x)[X_1,X_2,X_3]$.
\item Apply a conic-solver \cite{van2006solving} to find a nontrivial point $(X_1,X_2,X_3) \in \mathbb{C}(x)^3$. Reverse the transformations to find a point $(b_0, b_1, b_2) \in \mathbb{C}(x)^3$ on $C$.
\item Substitute the point into $G$ to obtain $G \in \mathbb{C}(x)[\tau]$.
\item Compute $L_G$ by right-dividing ${\rm LCLM}(L_3,G)$ by $G$.
\item As explained in \Cref{prerequisite}, the symmetric square of $L_G$ has order~5 and so we can use \Cref{thm:symmetric-square-invariant} to write it as $L_2^{\cs 2} \cs (\tau - r)$ for some $L_2 \in D$ and $r \in \mathbb{C}(x)$.
\item Compute $\widetilde{G}$, the inverse gauge transformation of $G$, with the extended Euclidean Algorithm (see \Cref{remark:gauge}).
\item Return $G, \widetilde{G}, L_2, r$ and stop.\end{enumerate}

\subsection{Examples}
See \cite{algo} to download these examples and the code.

\subsubsection{Example: OEIS A295371 }

Let \[a(n)= \frac{1}{2n}\sum_{k=0}^{n-1}\binom{n-1}{k}\binom{n+k}{k}\binom{2k}{k}(k+2)(-3)^{(n-1-k)}.\]
Zhi-Wei Sun conjectured that $a(n)$ is a positive odd integer for all $n>0$.
We prove\footnote{A referee pointed out that Lemma 2.3 in \cite{sun2018motzkin} proves a closely related statement where $a(n)$ has $\binom{n+1}{k}$ instead of $\binom{n-1}{k}$.}
this conjecture by using Algorithm \texttt{ReduceOrder} to find a formula for $a(n)$ without $n$ in the denominator. 
OEIS lists the recurrence
\begin{multline*}
    L_3=(2x + 1)(x + 3)^2\tau^3 - (2x + 1)(7x^2 + 38x + 52)\tau^2 \\
    - 3(2x + 5)(7x^2 + 4x + 1)\tau + 27(2x + 5)x^2
\end{multline*}
for $a(n)$. Algorithm \texttt{ReduceOrder} returns the pair $L_{2}, \tau - r$ given by
\begin{equation*}
    \begin{split}
        &\tau^2 + \tau - \frac{3(x+1)}{4(x-1)},\\
        &\tau - \frac{4(x^2 + 3x + 3)(2x + 5)(x-1)^2}{(x^2 + x + 1)(2x + 3)x(x + 1)},
    \end{split}
\end{equation*}
as well as the gauge transformations $G$ and $\widetilde{G}$, omitted here, between the solution spaces of $L_3$ and $L_2^{\cs 2} \cs (\tau-r)$.

Next, an implementation from \cite{cha2010solving} solves $L_2$ in terms of another OEIS entry. After simplifying, we find
\[
    a(n)= (b(n)^2 + 3  b(n-1)^2)/4,
\]
where $b(n)$ is the OEIS entry A002426. To verify that no errors were introduced during the simplification or other steps, we can prove this formula by computing recurrences for $a(n)$ and $b(n)$, and comparing the recurrences and initial values. Analyzing $b(n) \pmod{2}$ proves that $a(n)$ is an odd integer.

Solving equations in closed form, or, as in this paper, solving
equations in terms of solutions of other equations, is often helpful
for proofs. This is not limited to difference equations; one can also prove that $a(n) \in \mathbb{Z}$ by computing closed form solutions of the
differential equation for the generating function.

{\subsubsection{Example: A178808} Let \[a(n) = \frac{1}{n^2}\sum_{k=0}^{n-1}(2k+1)(D_k)^2, \] where $D_0, D_1, \cdots$ are central Delannoy numbers given by A001850.

Zhi-Wei Sun conjectured that $a(n)$ is always an integer and proved it in arXiv:1008.3887.

Our algorithm leads to this formula \[a(n) = \frac{6c(n)c(n-1) - c(n)^2 - c(n-1)^2}{8}\]
where $c(n)$ is the OEIS entry A001850. We can use it to prove the conjecture.
}

{\subsubsection{Example: A268138}
Let \[	a(n) = \frac{1}{n}\sum_{k=0}^{n-1}A001850(k)A001003(k+1).\]
The OEIS tells us that Zhi-Wei Sun conjectured that $a(n)$ is an odd integer, and essentially proved it in arXiv:1602.00574. It can also be proved from the following formula produced with our algorithm $a(n)=$
\[ \frac{(n(n+1)c^2(n)+(n-1)^2c^2(n-1)-3(n-1)(2n+1)c(n)c(n-1)}{2} \]

where $c(n)$ is the OEIS entry A001003. We can use it to prove the conjecture.}

\section{Future Work} \label{futurework}

For order 4 linear difference operators $L$, we want to prove that at least one of the following cases holds: 
\begin{enumerate}
    \item $L$ is reducible in $\mathbb{C}(x)[\tau]$;
    \item $L$ is irreducible but not absolutely irreducible; \\
    (then $L$ is gauge equivalent to an element of $\mathbb{C}(x)[\tau^2]$);
    \item $L\pda^{1}_{2}$ is gauge equivalent to $L_{2a} \cs L_{2b}$ for some second order operators $L_{2a}, L_{2b}\in D$;
    \item $L$ is gauge equivalent to some $L_2^{ \cs 3} \cs (\tau-r)$;
    \item $L$ is not 2-solvable.
\end{enumerate}
If we can prove this, then we also want to develop algorithms for each case. The factorizer for case (1) is already in Maple. Case~(2) is illustrated in \Cref{A260772}. For case~(3), one option is to factor the exterior square of $L\pda^{1}_{2}$, because in case~(3) this should have two factors of order~3, which can then be processed with the algorithm from \Cref{subsection:reduce-order}. However, other options for case~(3) should also be explored to see if they may be more efficient, and an algorithm for case~(4) needs to be developed as well.

\bibliographystyle{ACM-Reference-Format}
\bibliographystyle{plain}
 \newcommand{\noop}[1]{}

\end{document}